\theoremstyle{definition}
\newtheorem{thm}{Theorem}
\newtheorem*{thm*}{Theorem}
\newtheorem{defn}[thm]{Definition}
\newtheorem{claim}[thm]{Claim}
\newtheorem{remark}[thm]{Remark}
\newtheorem{fact}[thm]{Fact}
\newtheorem{conj}[thm]{Conjecture}
\renewcommand{\subset}{\subseteq}
\newcommand\R{\mathbb{R}}
\newcommand\N{\mathbb{N}}
\newcommand{\set}[2]{ \left\{ #1 :\, #2 \right\} }
\newcommand{\seqq}[2]{ \left( #1 :\, #2\right) }
\title{A counterexample regarding an equivalence relation on a product space}
\author{Assaf Shani}
\address{Department of Mathematics and Statistics, Concordia University, Montreal, QC  H3G 1M8, Canada}
\email{assaf.shani@concordia.ca}
\urladdr{https://sites.google.com/view/assaf-shani/}
\subjclass[2010]{03E15.}
\thanks{Research partially supported by NSF grant DMS-2246746 and NSERC grant RGPIN-2024-05827.}
\thanks{I would like to thank the anonymous referee for thoughtful and helpful comments.}
\date{\today}
\begin{document}
\maketitle
\begin{abstract}
    We present a counterexample to Conjecture~14.1.6 from \cite{Kano08}, regarding Borel equivalence relations on product spaces.
\end{abstract}

This paper concerns the study of equivalence relations on Polish spaces up to Borel reducibility.
Given equivalence relations $E$ and $F$ on Polish spaces $X$ and $Y$ respectively, a map $f\colon X\to Y$ is said to be a \textbf{reduction} of $E$ to $F$ if for any $x_1,x_2\in X$, 
\begin{equation*}
    x_1\mathrel{E}x_2\iff f(x_1)\mathrel{F}f(x_2).
\end{equation*}
We say that $E$ is \textbf{Borel reducible} to $F$, denoted ${E}\leq_B{F}$ if there is a Borel measurable function which is a reduction of $E$ to $F$. In this case, we think of $E$ as no more complicated than $F$. Borel reducibility is the most central concept in the study of equivalence relations on Polish spaces. Say that $E$ and $F$ are \textbf{Borel bireducible}, denoted $E\sim_B F$, if $E\leq_B F$ and $F\leq_B E$. We write $E<_B F$ (a strict reduction) if a Borel reduction exists in only that direction. 
An equivalence relation $E$ on a Polish space $X$ is \textbf{Borel} if $E$ is a Borel subset of $X\times X$, with the product topology.

A central motivation for the field is to study the complexity of various classification problems in mathematics. Generally speaking, separable mathematical objects can be coded as members of some Polish space. Natural notions of equivalence, such as isomorphism between countable graphs, isometry between separable metric spaces, or homeomorphism between compact metric spaces, can then be seen as equivalence relations on Polish spaces. These are generally analytic, and sometiems Borel. The focus on Borel measurable reductions is an attempt to consider only maps coming from ``natural mathematical practice''. The particular choice of ``Borel measurability'' is strongly supported by theoretical and experimental evidence, as well as the success of the field since its inception with the papers \cite{HKL90, FS89}. For more background the reader is referred to the expository books~\cite{Gao09,Kano08}, and the surveys \cite{Hjorth-Kechris-recent-developments-2001, Kechris99-Survey-NewDirections, Kechris_1997-classification-problems, kechris_tucker-drob_2012, Foreman-Borel-reduction, MottoRos-Survey21}.

Given equivalence relations $F_1, F_2$ on spaces $Y_1, Y_2$ respectively, define $F_1\times F_2$ on $Y_1\times Y_2$ as the pointwise relation: $(y_1,y_2), (y'_1,y'_2)\in Y_1\times Y_2$ are $F_1\times F_2$-related if $y_1\mathrel{F_1}y'_1$ and $y_2\mathrel{F_2}y'_2$.

In this note we provide a counterexample to the following conjecture, attributed to Zapletal in \cite{Kano08}. 

\begin{conj}[Conjecture~14.1.6 in~{\cite{Kano08}}]\label{conj: kanovei-zapletal}
    Let $X,Y$ be Polish spaces, $P\subset X\times Y$ a Borel set, $F$ a Borel equivalence relation on $X$ and $E$ a Borel equivalence relation on $P$ so that for any $(x,y),(x',y')\in P$,
    \begin{equation*}
        (x,y)\mathrel{E}(x',y') \implies x\mathrel{F}x'.
    \end{equation*}
    Assume that $G$ is a Borel equivalence relation so that for any $x_0\in X$,
    \begin{equation*}
        E\restriction \set{(x,y)\in P}{x\mathrel{F}x_0} \mathrel{\leq_B} {G}.
    \end{equation*}
    Then ${E}\mathrel{\leq_B}{F\times G}$.
\end{conj}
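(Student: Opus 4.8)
Since the abstract announces a counterexample, the task is not to prove this conjecture but to refute it, so a proof proposal here is a plan for producing $X,Y,P,F,E,G$ satisfying the hypotheses with $E\not\leq_B F\times G$. It helps to first see why the conjecture is plausible and where it must fail. The tempting proof chooses, for each $F$-class, a Borel reduction $\rho_{x_0}$ of the fiber relation $E\restriction\{x\mathrel F x_0\}$ to $G$ and assembles $(x,y)\mapsto\big(x,\rho_{[x]_F}(x,y)\big)$ into a reduction of $E$ to $F\times G$; this succeeds only if the $\rho_{x_0}$ can be chosen Borel-uniformly in $[x_0]$. The whole content of a counterexample is that fiberwise reducibility to $G$ is strictly weaker than reducibility to $F\times G$: the per-fiber reductions exist but cannot be glued into a single Borel map.

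To realize this gap I would build $E$ as a twisted bundle over a non-smooth base. A natural candidate takes $F=E_0$ on $X=2^{\N}$, regarded as the orbit equivalence relation of $\Gamma=\bigoplus_{\N}\mathbb Z/2$ acting by addition, fixes a Borel cocycle $\alpha\colon\Gamma\times 2^{\N}\to H$ into a Polish group $H$ acting on a space $W$, and defines $E$ on $P\subset 2^{\N}\times W$ by $(x,w)\mathrel E(x',w')$ iff some $\gamma\in\Gamma$ carries $x$ to $x'$ with $\alpha(\gamma,x)\cdot w$ in the same $H$-orbit as $w'$. By construction $E$ refines $F$ in the first coordinate, and restricting to a single $E_0$-class untwists $\alpha$ into a genuine $H$-action, so every fiber relation reduces to the fixed orbit equivalence relation $G=E^W_H$; both hypotheses then hold. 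The design constraint is to make $\alpha$ non-trivializable, so that the untwisting cannot be done uniformly, while keeping each individual fiber reducible to one fixed $G$ (this may force a rigid or non-amenable choice of base action and target rather than $E_0$ itself).

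The crux, and the step I expect to carry essentially all of the difficulty, is $E\not\leq_B F\times G$. I would assume a Borel reduction $\Phi=(\phi,\psi)$ of $E$ into $F\times G$ and extract from $\psi$ a Borel object that separates exactly the $E$-classes lying over a single $F$-class, effectively a Borel trivialization of $\alpha$ up to $G$-equivalence. The contradiction should come from ergodicity: genericity of the $\Gamma$-action forces $\psi$ to be $F$-invariant on a comeager (or measure-one) set, hence blind to the twisting, so it cannot separate distinct $E$-classes over the same $F$-class. The technically delicate point is that $\phi$ may vary within a fiber, so one must rule out that the $\phi$-coordinate rescues the reduction; controlling this interaction is where the argument is most likely to be subtle.

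In the style most natural to this subject I would likely replace the measure/category ergodicity argument by a computation in a symmetric or generic extension: a reduction of $E$ to $F\times G$ would define, from the $G$-invariant attached to a generic $x$, information about $x$ modulo $F$ that provably admits no such definition, contradicting the homogeneity of the extension. I expect the verification of the two hypotheses to be routine once the construction is fixed, with the real work concentrated in this non-reducibility argument and in choosing the twisting so that the hypotheses genuinely hold.
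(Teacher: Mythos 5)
Your proposal correctly reads the situation (the conjecture is to be refuted, not proved) and correctly isolates the heart of the matter: fiberwise reducibility to $G$ need not be achievable by a single Borel-uniform family of reductions. But what follows is a research plan with the hard parts left open, not a proof. Concretely: the cocycle $\alpha$, the group $H$, and the space $W$ are never specified; it is never verified that a cocycle exists which is non-trivializable and yet has every fiber relation reducible to one \emph{fixed} $G$ (you yourself hedge that the base may have to be ``rigid or non-amenable'' rather than $E_0$); and the central claim $E\not\leq_B F\times G$ is not established --- you offer an ergodicity heuristic and then concede that the interaction with the first coordinate $\phi$ of a putative reduction is unresolved, which is precisely the point where naive invariance arguments break. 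There is also a strategic problem with the choice of a countable base: the conjecture is \emph{true} when $F$ is countable Borel and $G$ is smooth (Theorem~14.1.1 in \cite{Kano08}), so a counterexample with $F=E_0$ would have to live in the hardest possible regime, requiring genuinely new rigidity-type non-reducibility results; it is not clear such a construction exists at all.

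The paper's counterexample avoids proving any new non-reducibility theorem. It takes $E$ to be (a presentation of) the second Friedman-Stanley jump $((=_\R)^+)^+$ on the product space $\R^\N\times(2^\N)^\N$, arranged so that the first coordinate records the union of the coded family of sets; then $F=(=_\R)^+$ on $\R^\N$, and $G=(=_{2^\N})^+$. The fiber hypothesis is verified by an explicit and elementary coding: once the $F$-class of $x_0$ is fixed, each subset $A^{(x,y)}_n$ of the countable set enumerated by $x_0$ is coded as a binary sequence via the indexing $k\mapsto x_0(k)$, giving a reduction of the fiber relation to $G$. The refutation is then a three-line computation: if the conjecture held, then
\begin{equation*}
((=_\R)^+)^+ \;\leq_B\; E \;\leq_B\; F\times G \;\sim_B\; (=_\R)^+\times(=_\R)^+ \;\sim_B\; (=_\R)^+,
\end{equation*}
contradicting the classical Friedman-Stanley fact that $E<_B E^+$ for Borel $E$. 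In other words, all the analytic difficulty your plan defers to an unproven ergodicity/forcing argument is, in the paper, outsourced to a single cited theorem; the creative step is the product-space presentation of the second jump, which your proposal does not contain.
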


Roughly speaking, the conjecture says that if $E$ extends $F$ on the first coordinate, and on every fiber $E$ is no more complicated than $G$, then $E$ is no more complicated than $F\times G$.

Two particularly important families of Borel equivalence relations are the smooth equivalence relations and the countable equivalence relations. (See Chapters 5.4 \& 7 in~\cite{Gao09}.)

An equivalence relation $G$ is said to be \emph{smooth} if $G$ is Borel reducible to $=_\R$, where $=_\R$ is the equality relation on the Polish space $\R$. 
A Borel equivalence relation $F$ on a Polish space $Y$ is \emph{countable} if each $F$-class is countable.

The motivation for this conjecture is as follows. The conjecture is true in the case where $F$ is a countable Borel equivalence relation and $G$ is smooth (see~\cite[Theorem~14.1.1]{Kano08}). This fact is essential in the proof of a dichotomy, due to Hjorth and Kechris~\cite{Hjorth-Kechris-recent-developments-2001}, for the equivalence relation $E_3$ (see \cite[Chapter~14]{Kano08}). 

Our counterexample will use the Friedman-Stanley jumps, defined as follows.
Given a space $X$, let $X^\N$ be the space of all sequence $x = \seqq{x(n)}{n\in\N}$, where $x(n) \in X$, equipped with the product topology. If $X$ is Polish, then $X^\N$ is Polish.
\begin{defn}[Friedman-Stanley~\cite{FS89}]
        Let $E$ be an equivalence relation on a Polish space $X$. Define $E^+$ on the space $X^\N$ by
        \begin{equation*}
            x\mathrel{E^+}y \iff \forall n \exists m (x(n)\mathrel{E}y(m)) \mathrel{\&} \forall n \exists m (y(n) \mathrel{E} x(m)),
        \end{equation*}
        that is, $\set{[x(n)]_E}{n\in\N}=\set{[y(n)]_E}{n\in\N}$.
    \end{defn}
Note that if $E$ is Borel, then $E^+$ is Borel. Friedman and Stanley proved the following (see~\cite[8.3.6]{Gao09}).
\begin{fact}[Friedman-Stanley~{\cite{FS89}}]\label{fact: FS jump operator}
    For a Borel equivalence relation $E$, $E <_B E^+$.
\end{fact}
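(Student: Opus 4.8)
The plan is to prove the two halves $E \leq_B E^+$ and $E^+ \not\leq_B E$ separately, assuming throughout (for strictness) that $E$ has at least two classes; if $E$ has a single class then so does $E^+$ and the two are trivially bireducible. For $E \leq_B E^+$ I would use the constant-sequence map $x \mapsto \bar x := (x,x,x,\dots)$. This is continuous, and since the set of $E$-classes realized by $\bar x$ is exactly $\{[x]_E\}$, we have $\bar x \mathrel{E^+} \bar y$ iff $[x]_E = [y]_E$ iff $x \mathrel E y$; so it is a Borel reduction. This half needs no hypothesis on $E$.

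For $E^+ \not\leq_B E$, suppose toward a contradiction that $f \colon X^\N \to X$ is a Borel reduction. The first observation is that any reduction induces a well-defined injection $[\bar x]_{E^+} \mapsto [f(\bar x)]_E$ of quotients, so $E^+ \leq_B E$ forces $\size{X^\N/E^+} \leq \size{X/E}$. By Silver's dichotomy $E$ has finitely many, countably many, or perfectly many classes. If $E$ has $k$ classes with $2 \le k < \infty$, then $E^+$ has the $2^k - 1 > k$ classes indexed by the nonempty subsets of $X/E$; if $E$ has $\aleph_0$ classes, then $E^+$ has $2^{\aleph_0}$ classes, one for each nonempty set of realized $E$-classes. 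In either case $\size{X^\N/E^+} > \size{X/E}$, contradicting the injection. This disposes of every case except the one where $E$ has perfectly many classes.

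The remaining case, where $E$ has perfectly many (hence $2^{\aleph_0}$) classes, is the genuine content of the Fact and the main obstacle, since a count of classes no longer separates $E$ from $E^+$. Here I would pass to a perfect set $P \subseteq X$ of pairwise $E$-inequivalent elements and run a genericity (Cohen forcing / Baire category) argument. A generic enumeration $\dot x$ of $P$ gives a name that is \emph{$E^+$-pinned}: any two mutually generic enumerations have range $P$ and so are $E^+$-related, yet no ground-model sequence is $E^+$-equivalent to $\dot x$, since $P$ is uncountable. If $E$ is \emph{pinned} (every $E$-pinned name is realized, up to $E$, by a ground-model point), which holds in particular whenever $E$ is smooth or countable, then $f(\dot x)$ is an $E$-pinned name, so $\force f(\dot x) \mathrel E y_0$ for some ground-model $y_0$; by $\Sigma^1_1$-absoluteness there is then a ground-model $\bar w$ with $f(\bar w) \mathrel E y_0 \mathrel E f(\dot x)$, whence $\bar w \mathrel{E^+} \dot x$, contradicting nontriviality. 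The hardest point is fully general $E$, where $E$ may itself carry nontrivial pinned names; there one replaces the pinned/unpinned dichotomy by the monotonicity of an ordinal \emph{pinned rank} under $\leq_B$, and shows that $E^+$ has strictly larger pinned rank than $E$. Making precise that $f$ cannot cohere across a generic enlargement of a countable subset of $P$ is exactly where the real work lies, and this is the step I expect to be the principal obstacle.
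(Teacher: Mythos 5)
Your two easy halves are fine: the constant-sequence map does reduce $E$ to $E^+$, Silver plus class-counting handles $E$ with at most countably many classes, and your collapse-name argument correctly shows $E^+\not\leq_B E$ whenever $E$ is \emph{pinned}. But the case you defer --- Borel $E$ with perfectly many classes that is itself unpinned --- is not a residual technicality; it \emph{is} the Friedman--Stanley theorem. Naming a strategy (``monotonicity of an ordinal pinned rank'') and conceding that it is the principal obstacle leaves that case unproved. The omission is fatal precisely for the use this paper makes of the Fact: it is invoked once, for $E=(=_\R)^+$, to conclude $((=_\R)^+)^+\not\leq_B(=_\R)^+$, and $(=_\R)^+$ is the canonical \emph{unpinned} Borel equivalence relation --- its unpinnedness is witnessed by exactly your collapse-enumeration name applied to an uncountable set of reals. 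So the portion of the theorem you actually establish excludes the one instance needed here. (A smaller inaccuracy: your mutually generic sequences enumerate the ground-model points of the perfect set $P$, not the reinterpretation of $P$ in the extension, which acquires new elements; the argument survives, but as written it conflates the two.)

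For comparison, the paper does not prove this Fact at all; it cites Friedman--Stanley, pointing to~\cite[8.3.6]{Gao09}, where the proof runs through Friedman's theorem that no Borel equivalence relation admits a Borel diagonalizer --- a Baire-category argument carried out without any forcing --- from which strictness of the jump is derived. Your pinned-rank route can be made to work, but what it actually requires is the theory of virtual classes (Zapletal, Larson--Zapletal): (i) a Borel reduction of $E^+$ to $E$ induces an injection from virtual $E^+$-classes to virtual $E$-classes, by the same absoluteness considerations as in your pinned case; (ii) virtual $E^+$-classes correspond to arbitrary nonempty \emph{sets} of virtual $E$-classes, so if the latter form a set of cardinality $\lambda$, the former number $2^\lambda$; and (iii) --- the hard part --- for \emph{Borel} $E$ the virtual classes really do form a set, i.e., the pinned cardinal of a Borel equivalence relation is bounded. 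Without (iii), Cantor's theorem yields no contradiction, since the virtual $E$-classes could a priori form a proper class (as happens for some analytic equivalence relations). So the missing step is a genuine theorem, not a detail: as it stands, your proposal proves only the weaker statement that the jump is strict above pinned Borel equivalence relations, which does not suffice for this paper's argument.
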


Define $F$ on $X = \R^\N$ by $ F = (=_{\R})^+$. That is, $x\mathrel{F}y$ if $x$ and $y$ enumerate the same sets of reals.

Next we define the equivalence relation $E$. Let $2^\N$ be the space of binary sequences, given the product topology, where the set $2 = \{0,1\}$ is given the discrete topology. 
Consider the Polish space $\R^\N \times (2^\N)^\N$ with the product topology. 
Given $(x,y)\in \R^\N \times (2^\N)^\N$, define $A^{(x,y)}_n = \set{x(m)}{y(n)(m)=1}$ for $n\in\N$. That is, we view each $y(n)$ as a binary sequence ``carving out'' a subset of the set enumerated by $x$. 

\begin{equation*}
       \begin{array}{ccccc}
\vdots & \vdots & \vdots & \vdots\\
{\color{yellow} \ast} & 1 & 0 & 1 & \ldots\\
{\color{green} \ast} & 1 & 1 & 0 & \ldots\\
{\color{red} \ast} & 0 & 1 & 1 & \ldots\\
{\color{blue} \ast} & 0 & 1 & 0 & \ldots\\
x & y(0) & y(1) & y(2)
\end{array}\mapsto
\begin{array}{cccc}
\vdots & \vdots & \vdots\\
{\color{yellow} \ast} & - & {\color{yellow} \ast} & \ldots\\
{\color{green} \ast} & {\color{green} \ast} & - & \ldots\\
- & {\color{red} \ast} & {\color{red} \ast} & \ldots\\
- & {\color{blue} \ast} & - & \ldots\\ 
A^{(x,y)}_0 & A^{(x,y)}_1 & A^{(x,y)}_2
\end{array}
\end{equation*}

Define $P\subset \R^\N \times (2^\N)^\N$ as the set of all $(x,y)$ such that:
\begin{enumerate}
    \item $\forall m\exists k (y(k)(m)=1)$;
    \item $\forall k \exists m (y(k)(m)=1)$;
    \item $\forall k,l_1,l_2(x(l_1)\mathrel{=}{}x(l_2)\rightarrow y(k)(l_1)=y(k)(l_2))$.
\end{enumerate}
That is: (1) each $x(m)$ appears in $A^{(x,y)}_k$ for some $k$, (2) each $A^{(x,y)}_k$ is not empty, and (3) if $x$ enumerates the same item twice, on indices $l_1$ and $l_2$, then each binary sequence $y(k)$ agrees on $l_1$ and $l_2$. Note that $P$ is a Borel set. In fact it is a dense $G_\delta$ subset of $\R^\N \times (2^\N)^\N$.

Finally, we define an equivalence relation $E$ on $P$ by
\begin{equation*}
    (x,y) \mathrel{E} (x',y') \iff \set{A^{(x,y)}_n}{n\in\N} = \set{A^{(x',y')}_n}{n\in\N},
\end{equation*}
that is, $(x,y)$ and $(x',y')$ code the same sets of subsets of $\R$. Note that $E$ is Borel.

\begin{remark}
    For $(x,y)\in P$, property (1) in the definition of $P$ implies that $\set{x(m)}{m\in\N} = \bigcup \set{A^{(x,y)}_n}{n\in\N}$. It follows that $E$ and $F$ satisfy the property
    \begin{equation*}
        (x,y)\mathrel{E}(x',y') \implies x\mathrel{F}x'.
    \end{equation*}
\end{remark}

Let $=_{2^\N}$ be the equality relation on $2^\N$, and let $G = (=_{2^\N})^+$. Note that $G$ and $F$ are Borel bireducible (as there is a Borel bijection between $\R$ and $2^\N$, see~\cite[1.4.3]{Gao09}).
\begin{claim}\label{claim: E fibers red to G}
    For any $x_0\in \R^\N$,
        \begin{equation*}
        E\restriction \set{(x,y)\in P}{x\mathrel{F}x_0} \mathrel{\leq_B} {G}.
    \end{equation*}
\end{claim}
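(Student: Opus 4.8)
The plan is to fix $x_0\in\R^\N$, write $S = \set{x_0(m)}{m\in\N}$ for the set of reals it enumerates, and construct an explicit Borel reduction $\Phi$ from the fiber $\set{(x,y)\in P}{x\mathrel{F}x_0}$ into $(2^\N)^\N$ witnessing $E\restriction\set{(x,y)\in P}{x\mathrel{F}x_0} \leq_B G$. The first point I would stress is that on this fiber every $(x,y)$ satisfies $\set{x(l)}{l\in\N} = S$, so each $A^{(x,y)}_n$ is a subset of the \emph{fixed} set $S$. Since $x$ may enumerate $S$ in a completely different way from $x_0$, I cannot transfer the coordinates of $y$ directly; instead I would code subsets of $S$ by their characteristic functions relative to the fixed enumeration $x_0$. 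That is, for $A\subseteq S$ let $\chi_A\in 2^\N$ be given by $\chi_A(m) = 1 \iff x_0(m)\in A$. Because every element of $S$ equals $x_0(m)$ for some $m$, the map $A\mapsto\chi_A$ is injective on subsets of $S$.

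Using this I would define $\Phi(x,y) = z\in(2^\N)^\N$ by $z(n) = \chi_{A^{(x,y)}_n}$, that is
\begin{equation*}
    z(n)(m) = 1 \iff x_0(m)\in A^{(x,y)}_n \iff \exists l\,\big(y(n)(l)=1 \mathrel{\&} x(l)=x_0(m)\big).
\end{equation*}
To verify that $\Phi$ is a reduction, I would note that for $(x,y),(x',y')$ on the fiber, $(x,y)\mathrel{E}(x',y')$ means $\set{A^{(x,y)}_n}{n\in\N} = \set{A^{(x',y')}_n}{n\in\N}$ as families of subsets of $S$. By injectivity of $A\mapsto\chi_A$ this holds if and only if $\set{z(n)}{n\in\N} = \set{z'(n)}{n\in\N}$, where $z=\Phi(x,y)$ and $z'=\Phi(x',y')$, which is precisely $z\mathrel{G}z'$.

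The only point requiring care, and what I expect to be the main (though mild) obstacle, is checking that $\Phi$ is Borel, since everything depends on genuine equality of reals rather than on the shape of the enumeration. As $\Phi$ takes values in $(2^\N)^\N$, it suffices to show that each coordinate $(x,y)\mapsto z(n)(m)$ is a Borel map into $\{0,1\}$. For fixed $n,m$ (and the fixed parameter $x_0$), the displayed condition is a countable union over $l$ of sets $\set{(x,y)}{y(n)(l)=1}\cap\set{(x,y)}{x(l)=x_0(m)}$; the first factor is clopen and the second is closed, as equality of reals is a closed condition, so the whole set is $F_\sigma$ and hence Borel. Finally, the fiber $\set{(x,y)\in P}{x\mathrel{F}x_0}$ is a Borel subset of $P$ because $F$ is Borel, so $\Phi$ is a Borel reduction of $E\restriction\set{(x,y)\in P}{x\mathrel{F}x_0}$ to $G$, as desired.
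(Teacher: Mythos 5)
Your proposal is correct and is essentially the paper's own proof: your map $\Phi$ coincides with the paper's reduction $f$ (both send $(x,y)$ to the sequence of characteristic functions of the sets $A^{(x,y)}_n$ relative to the fixed enumeration $x_0$), and your appeal to injectivity of $A\mapsto\chi_A$ on subsets of $S$ is exactly the content of the paper's key equivalence $(\star)$. The only cosmetic difference is that the paper defines the map via matching existential/universal clauses (invoking clause (3) of $P$) and verifies $(\star)$ by an elementwise two-sided check, while you package the same verification as injectivity of the coding and spell out the Borelness, which the paper leaves implicit.
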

\begin{proof}
    Fix $x_0\in \R^\N$. Define $f\colon \set{(x,y)\in P}{x\mathrel{F}x_0} \to (2^\N)^\N$ by
    \begin{equation*}
    \begin{split}
    f(x,y)(n)(k) = i & \iff (\forall k'\in\N)(x(k')=x_0(k) \implies y(n)(k')=i) \\ & \iff (\exists k'\in \N)(x(k')=x_0(k) \wedge y(n)(k')=i). 
    \end{split}    
    \end{equation*}
The equivalence between the definitions above follows from clause (3) in the definition of $P$. Note that $f(x,y)(n)(k)$ is well defined by clause (2). 

What $f$ does is the following. The $F$-class of $x_0$ are all sequences which enumerate the same set as $x_0$. Using a fixed enumeration of this set, via $x_0$, we code subsets of it, $A^{(x,y)}_n$, as binary sequences by identifying $k$ with $x_0(k)$. In particular, $f(x_0,y) = y$ for $(x_0,y)\in P$. We claim that $f$ is a reduction of $E\restriction \set{(x,y)\in P}{x\mathrel{F}x_0}$ to $G$.

The main point is the following. Given $(x,y), (x',y')\in P$ with $x\mathrel{F} x_0 \mathrel{F} x'$,
\begin{equation*}
(\star)\hspace{5mm} \forall n,m\in\N\, (A_n^{(x,y)} = A_m^{(x',y')} \iff f(x,y)(n) = f(x',y')(m)).
\end{equation*}
This implies that $(x,y)\mathrel{E}(x',y') \iff f(x,y)\mathrel{G}f(x',y')$, and so $f$ is the desired reduction.

To see $(\star)$, assume first that $A_n^{(x,y)} = A_m^{(x',y')}$. For each $k\in\N$ fix $k_1, k_2$ so that $x(k_1) = x_0(k) = x'(k_2)$. Then
\begin{equation*}
    f(x,y)(n)(k) = y(n)(k_1) = y'(m)(k_2) = f(x',y')(m)(k),
\end{equation*}
where the middle equality follows from clause (3) in the definition of $P$ and the assumption $A_n^{(x,y)} = A_m^{(x',y')}$. We conclude that $f(x,y)(n) = f(x,y)(m)$. 

Next, assume that $f(x,y)(n) = f(x',y')(m)$. For any $k_1$ so that $y(n)(k_1) = 1$ (and so $x(k_1)\in A_n^{(x,y)}$) we show that $x(k_1)\in A_m^{(x',y')}$. Fix $k,k_2$ so that $x(k_1) = x_0(k) = x'(k_2)$. Then
\begin{equation*}
    y'(m)(k_2) = f(x',y')(m)(k) = f(x,y)(n)(k) = y(n)(k_1) = 1,
\end{equation*}
and so $x(k_1) = x'(k_2) \in A_m^{(x',y')}$. It follows that $A_n^{(x,y)} \subset A_m^{(x',y')}$. By symmetry we also get $A_m^{(x',y')} \subset A_n^{(x,y)}$, and so $A_n^{(x,y)} = A_m^{(x',y')}$.
\end{proof}

\begin{thm}
The equivalence relations $E, F, G$ provide a counterexample to Conjecture~\ref{conj: kanovei-zapletal}.
\end{thm}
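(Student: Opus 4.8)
The plan is to show that $E$, $F$, and $G$ satisfy all hypotheses of Conjecture~\ref{conj: kanovei-zapletal} but violate its conclusion. The hypotheses are exactly what has already been established: the Remark gives $(x,y)\mathrel E(x',y')\implies x\mathrel F x'$, and Claim~\ref{claim: E fibers red to G} gives the fiberwise bound $E\restriction\set{(x,y)\in P}{x\mathrel F x_0}\leq_B G$ for every $x_0$. It therefore remains only to prove
\[
E\not\leq_B F\times G.
\]
The strategy is to sandwich both sides inside the Friedman--Stanley hierarchy over $=_\R$. I will show that $F\times G$ is no more complex than the classification of countable sets of reals, namely $F\times G\leq_B (=_\R)^+$, while $E$ is at least as complex as the next jump, namely $(=_\R)^{++}\leq_B E$, where $(=_\R)^{++}$ abbreviates $((=_\R)^+)^+$. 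Granting these two reductions, Fact~\ref{fact: FS jump operator} applied to the Borel equivalence relation $(=_\R)^+$ yields $(=_\R)^+<_B(=_\R)^{++}$, so a reduction $E\leq_B F\times G$ would produce the chain $(=_\R)^{++}\leq_B E\leq_B F\times G\leq_B(=_\R)^+$, contradicting $(=_\R)^{++}\not\leq_B(=_\R)^+$.

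For the upper bound on $F\times G$, recall that $F=(=_\R)^+$ and that $G\sim_B F$; since products respect Borel bireducibility, it suffices to verify the standard collapse $(=_\R)^+\times(=_\R)^+\leq_B(=_\R)^+$. A point of the left-hand side codes a pair $(S_1,S_2)$ of nonempty countable sets of reals. Fixing a Borel injection $\pi\colon\{0,1\}\times\R\to\R$, I send it to an enumeration of the single countable set $\pi(\{0\}\times S_1)\cup\pi(\{1\}\times S_2)$. The first-coordinate tag is recoverable, so this set is determined by, and determines, the pair $(S_1,S_2)$ up to $(=_\R)^+$, giving the desired reduction.

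For the lower bound on $E$, I reduce $(=_\R)^{++}$ to $E$. A point of $(=_\R)^{++}$ is a code $z\in(\R^\N)^\N$ for a countable family $\set{B_n}{n\in\N}$ of countable sets of reals, where $B_n=\set{z(n)(m)}{m\in\N}$; note that each $B_n$ is automatically nonempty, as it contains $z(n)(0)$. I let $x\in\R^\N$ be a fixed diagonal enumeration of $\bigcup_n B_n$ and define $y(n)(l)=1\iff x(l)\in B_n\iff \exists m\,(z(n)(m)=x(l))$. Then $A_n^{(x,y)}=B_n$, so the family covers the enumerated set (clause (1)), consists of nonempty sets (clause (2)), and membership depends only on the real value (clause (3)); hence $(x,y)\in P$. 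Since $\set{A_n^{(x,y)}}{n}=\set{B_n}{n}$, the Borel map $z\mapsto(x,y)$ is a reduction of $(=_\R)^{++}$ to $E$.

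The proof then concludes by assembling the chain above. There is no single deep technical step once the construction of $E$ is in hand; the point worth checking carefully is the collapse $F\times G\leq_B(=_\R)^+$, which is what places $E$, a genuine Friedman--Stanley jump, strictly above $F\times G$ and thereby breaks the conjecture. The remaining care is routine: one must confirm that the diagonal enumeration $x$ and the characteristic functions $y(n)$ are computed in a Borel way uniformly in $z$ (the condition defining $y(n)(l)$ is $F_\sigma$ in $z$), and that the image lands in $P$ even in degenerate cases, such as $\bigcup_n B_n$ being finite so that $x$ enumerates with repetitions, which clause (3) is precisely designed to accommodate.
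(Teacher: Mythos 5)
Your proposal is correct and follows essentially the same route as the paper: verify the hypotheses via the Remark and Claim~\ref{claim: E fibers red to G}, reduce $((=_\R)^+)^+$ to $E$ by enumerating $\bigcup_n B_n$ diagonally and carving out each $B_n$ with a characteristic sequence (this is exactly the paper's map $x(e(i,j))=z(i)(j)$, $y(n)(e(i,j))=1\iff z(i)(j)\in B_n$), collapse $F\times G\leq_B(=_\R)^+$ by tagging and interleaving, and derive the contradiction from Fact~\ref{fact: FS jump operator}. The only cosmetic difference is that you phrase the two reductions as a ``sandwich'' with named bounds, while the paper writes the same chain inline.
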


\begin{remark}
    The equivalence relation $E$ is Borel bireducible with the `second Friedman-Stanley jump', $((=_{\R})^+)^+$.
    The equivalence relation $E$ was studied in~\cite{Shani-FS-homomorphisms}, particularly with respect to Baire category techniques. Note that $((=_{\R})^+)^+$ is defined on the space $(\R^\N)^\N$ and therefore does not seem to provide a counterexample to Conjecture~\ref{conj: kanovei-zapletal}. The different presentation of $((=_{\R})^+)^+$, as $E$ on $\R^\N \times (2^\N)^\N$, is therefore crucial here. 
\end{remark}
We include a proof that $((=_{\R})^+)^+ \leq_B E$, which we use below. The idea is simple. Given $z\in (\R^\N)^\N$, each $z(n)\in \R^\N$ codes a subset of $\R$. We will send $z$ to a pair $(x,y)\in P$ so that $x$ enumerates the set of all reals appearing in the sets $z(n)$, and $y(n)$ carves out the subset $z(n)$ of $x$.

Fix a bijection $e\colon \N\times\N\to \N$. Define $x(e(i,j)) = z(i)(j)$. Define $y(n)(e(i,j)) = 1$ if there is some $k$ so that $z(n)(k) = z(i)(j)$, and $y(n)(e(i,j))=0$ otherwise. The map $z\mapsto (x,y)$ is a Borel reduction of $((=_\R)^+)^+$ to $E$.

We will also use the  well known fact that $(=_\R)^+ \times (=_\R)^+ \sim_B (=_\R)^+$. For the non-trivial direction, fix an injective Borel map $\iota\colon \R\times \{0,1\} \to \R$ and consider the map sending $(x,y)\in \R^\N \times \R^\N$ to $z\in\R^\N$ so that $z(2n) = \iota(x(n),0)$ and $z(2n+1) = \iota(y(n),1)$. Then $(x,y)\mapsto z$ is is a Borel reduction of $(=_\R)^+ \times (=_\R)^+$ to $(=_{\R})^+$.

Now Conjecture~\ref{conj: kanovei-zapletal} would imply that $E \leq_B F \times G$, and so
\begin{equation*}
    ((=_{\R})^+)^+ \leq_B E \leq_B F \times G \leq_B (=_\R)^+ \times (=_\R)^+ \leq_B (=_\R)^+,
\end{equation*}
a contradiction, as $((=_{\R})^+)^+ \not \leq_B (=_\R)^+$ by Fact~\ref{fact: FS jump operator}. 

\bibliographystyle{alpha}
\bibliography{bibliography}
\end{document}